\newtheorem{theorem}{Theorem}
\newtheorem*{unsignedlem}{Lemma}
\newtheorem*{propos}{Proposition}
\newtheorem*{problem}{Gap}
\theoremstyle{definition}
\newtheorem*{definition}{Definition}
\newtheorem*{remark}{Remark}
\newtheorem*{example}{Example}
\title{Betti numbers of small covers and their two-fold coverings}
\date{}
\author{Dmitry Ulyumdzhiev}
\thanks{The work is partially supported by the grant of the President of the Russian Federation MD - 2907.2017.1}
\address{Moscow State University}
\address{Steklov Mathematical Institute of RAS}
\email{dulumzhiev@gmail.com}
\begin{document}
\maketitle

\begin{abstract}
We point out a gap in the proof of the Davis--Januszkiewicz theorem on cohomology of small covers of simple polytopes, and give a correction to this proof. We use this theorem to  compute explicitly the Betti numbers for a wide class of two-fold coverings over small covers. We describe a series of examples in this class and explain why this class turns out to be special in the context of the general problem of computing the modulo two Betti numbers  of two-fold coverings over small covers.
\end{abstract}

\section{Introduction}

This paper contains two main subjects.

First, we describe a gap in the proof of the famous theorem on cohomology of small covers due to M. W. Davis and T. Januszkiewicz. We present a short correction to the original proof. 

Second, we study the problem of computation of the modulo two Betti numbers of two-fold coverings over small covers of simple polytopes. It is clear that this computation can be reduced to the computation of dimensions of certain special $\mathbb{Z}_2$-algebras, e.g. using the Gysin exact sequence. However, this approach does not allow to obtain an explicit answer for almost all interesting series of two-fold coverings over small covers. We point out a wide class of two-fold coverings over small covers for which the problem of computation of the modulo two Betti numbers  can be solved explicitly. Namely, we introduce the concept of \textit{section classes} in the first cohomology of a small cover. From various points of view these classes turn out to be the simplest ones. If a two-fold covering over a small cover corresponds to a section class, we obtain an explicit combinatorial formula for its modulo two Betti numbers. 

The study of the topology of two-fold coverings over small covers of simple polytopes is motivated among others by connections with the classical problem on realization of cycles, cf.~\cite{Gaif}. Recall that an oriented connected closed smooth manifold $M^n$ is said to be a \textit{Universal Manifold for Realization of Cycles} (or simply a \textit{URC-manifold}) if for any topological space $X$ and any homology class $z \in H_n (X ; \mathbb{Z})$, there exists a finite-fold covering $\widehat{M} \to M^n$ and a continuous mapping $f \colon \widehat{M} \to X$ such that $f_{*} [\widehat{M}] = kz$ for a non-zero integer $k$. In~\cite{Gaif}, Gaifullin found many examples of URC-manifolds among  the orientation two-fold coverings over small covers of simple polytopes, in particular, of the so-called graph-associahedra. Hence, it is natural to ask which of these manifolds is the `smallest' in any reasonable sense. For instance, one may ask which of these manifolds has the smallest modulo two Betti numbers. Unfortunately, the author still cannot answer this question, since for most of small covers over graph-associahedra, their orientation two-fold coverings correspond to non-sectional cohomology classes.

\section{Small covers}

Small covers of simple convex polytopes were originally introduced by Davis and Januszkiewicz in their pioneer work~\cite{DJ1}. Recall the definition of small covers and their basic properties.

Let $P$ be an $n$-dimensional simple convex  polytope with facets $F_1, \dots, F_m$. A map $$\lambda \colon \{ F_1,\ldots,F_m \} \to \mathbb{Z}_2^n$$ is called \textit{characteristic} if for each pairwise different facets $F_{i_1}, \dots, F_{i_n}$ intersecting in a vertex, the vectors $\lambda (F_{i_1}),\ldots,\lambda(F_{i_n})$ form a basis of $\mathbb{Z}_2^n$. For a point $x\in P$, we denote by $\mathrm{St}_{\lambda} (x)$ the linear span of the vectors~$\lambda (F)$, where $F$ runs over  all facets containing~$x$. We denote by~$\lambda_i(F)$ the $i$th coordinate of~$\lambda (F)$.

\begin{definition}
The \textit{small cover $M_{P, \lambda}$} of a simple polytope $P$ corresponding to a characteristic map $\lambda$ is the quotient space of $P \times \mathbb{Z}_2^n$ by the equivalence relation~$\sim$ such that  $(x, a) \sim (y, b)$ if and only if $$\begin{cases} x = y \\ (a + b) \in \mathrm{St}_{\lambda} (x) \mbox{.} \end{cases}$$
\end{definition}

Denote by $[x, a]$ the equivalence class of a point $(x, a) \in P \times \mathbb{Z}_2^n$.
The small cover $M_{P, \lambda}$ is an $n$-dimensional smooth manifold  with the natural $\mathbb{Z}_2^n$-action given by $b \cdot [x, a] = [x, a + b]$. The orbit space of this action is~$P$, the projection map $p \colon M_{P, \lambda} \to P$ is given by $[x, a] \mapsto x$, and the stabilizer subgroup of a point $x\in P$ is $\mathrm{St}_{\lambda} (x)$.  


\section{Davis--Januszkiewicz theorems}

Recall that the $h$-numbers of a simple polytope~$P$ are the numbers $h_i=h_i(P)$, $i=1,\ldots,n$, given by
$$
h_0t^n+h_1t^{n-1}+\cdots+h_n=(t-1)^n+f_0(t-1)^{n-1}+\cdots+f_{n-1},
$$
where $f_i$ is the number of codimension $i+1$ faces of~$P$.

\begin{theorem}[\cite{DJ1}, Theorem 3.1, Corollary 3.7]\label{Th1}

The modulo $2$ Betti numbers of $M_{P, \lambda}$ are equal to the $h$-numbers of~$P$. The action of $\mathbb{Z}_2^n$ on $H^{*} (M_{P, \lambda} ; \mathbb{Z}_2)$  induced by the natural  action of~$\mathbb{Z}_2^n$ on~$M_{P,\lambda}$ is trivial.

\end{theorem}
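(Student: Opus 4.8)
The plan is to compute $H^*(M_{P,\lambda};\mathbb{Z}_2)$ via a Mayer–Vietoris / orbit-space spectral-sequence argument, following the standard Davis–Januszkiewicz strategy, but (since this paper's stated aim is to repair a gap in exactly this argument) with attention to the step where facets are collapsed one at a time.

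First I would set up the geometry of the quotient map $p\colon M_{P,\lambda}\to P$. The key observation is that over the interior of $P$ the cover is trivial, $p^{-1}(\mathrm{int}\,P)\cong \mathrm{int}\,P\times\mathbb{Z}_2^n$, and the gluing over each facet $F_i$ identifies the $\mathbb{Z}_2^n$-sheets in pairs according to the translation by $\lambda(F_i)$. Concretely, I would filter $P$ by the number of facets "allowed to be glued": let $M_k$ be the preimage of the union of $P$ with the first $k$ facets' worth of identifications, so $M_0$ is $|P|$ disjoint copies of the disk-like interior blow-up and $M_m=M_{P,\lambda}$. Passing from $M_{k-1}$ to $M_k$ is a Mayer–Vietoris step gluing two halves along a copy of $p^{-1}(F_k)$, which is itself (inductively) the small cover of the facet $F_k$ with the restricted characteristic map. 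This is where care is needed: one must check that the inductive hypothesis genuinely applies, i.e. that the restricted characteristic map on each face is still characteristic and that the subspace $p^{-1}(\text{face})$ sits inside $M_{P,\lambda}$ as the corresponding small cover — this compatibility of the face structure with the filtration is, I expect, precisely the point the paper flags as gappy, so I would prove it explicitly rather than by appeal to a picture.

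Next, the combinatorial bookkeeping. Each Mayer–Vietoris step, when it does not create cancellation, raises the rank of $H^*$ exactly by the Betti numbers (shifted) of the facet's small cover, and the claim is that summing these contributions over the whole face lattice reproduces the recursion defining the $h$-vector, namely $h_i(P)=\sum_{F}(\text{lower }h\text{-data})$ matching $h_0t^n+\dots+h_n=(t-1)^n+\sum f_{n-1-i}(t-1)^{i}$. Here one should recognize the well-known identity that the $h$-vector of $P$ is obtained from the $h$-vectors of its facets by a "shelling-type" summation; I would phrase the induction so that the Mayer–Vietoris sequences all split (no differentials), which forces $\dim_{\mathbb{Z}_2}H^k(M_{P,\lambda};\mathbb{Z}_2)=h_k(P)$ by a dimension count, since the alternating sum is fixed by the Euler characteristic and the total dimension is bounded above by $\sum h_i$. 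The splitting itself can be seen because each attaching map $p^{-1}(F_k)\hookrightarrow M_{k-1}$ admits, up to homotopy, a retraction coming from the free $\lambda(F_k)$-direction.

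Finally, the triviality of the $\mathbb{Z}_2^n$-action on cohomology. I would observe that the whole filtration and all the Mayer–Vietoris sequences above are $\mathbb{Z}_2^n$-equivariant, so the $\mathbb{Z}_2^n$-action on $H^*(M_{P,\lambda};\mathbb{Z}_2)$ is assembled from its action on the pieces $H^*(p^{-1}(\text{face}))$; by induction on dimension these are trivial, and at the bottom ($0$-dimensional faces, i.e. the finitely many points over vertices) the action is visibly trivial. Alternatively, and more cleanly, a finite $2$-group acting on a $\mathbb{Z}_2$-vector space always has a fixed vector in each nonzero invariant subspace, and the associated-graded argument plus the fact that the action preserves the filtration forces the induced action on $H^*$ to be unipotent; combined with the dimension count $\dim H^*=\sum h_i(P)=\#\{\text{vertices of }P\}=|\mathbb{Z}_2^n\text{-orbit data}|$ being accounted for without room for a nontrivial permutation, triviality follows. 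The main obstacle, as noted, is not any single computation but making the inductive descent through faces rigorous — ensuring the subspaces $p^{-1}(\text{face})$ are honestly small covers and that the Mayer–Vietoris differentials vanish at every stage — which is exactly the repair the paper promises.
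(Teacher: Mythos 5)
Your proposal takes a genuinely different route from the paper, but unfortunately it leaves the essential hard step unproved and invokes a combinatorial identity that does not hold in the form stated.

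The paper's fix stays very close to Davis--Januszkiewicz's original Morse-theoretic decomposition: it keeps the sets $F_v$ defined from a generic height function, but instead of claiming the open cells $p^{-1}(F_v)$ form a CW complex (which fails), it considers the $\mathbb{Z}_2^n$-invariant filtration $G_p=\bigcup_{\ell(v)\le s_p}F_v$ of $M_{P,\lambda}$ and runs the spectral sequence of that filtration. The successive quotients $G_p/G_{p-1}$ are spheres (of dimension equal to the index of the vertex at height $s_p$), so each $E_1^{p,q}$ is $0$ or $\mathbb{Z}_2$. This immediately yields both $\dim H^i(M_{P,\lambda};\mathbb{Z}_2)\le h_i(P)$ and triviality of the $\mathbb{Z}_2^n$-action on the spectral sequence, after which the rest of Davis--Januszkiewicz's argument produces the reverse inequality. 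Your plan, by contrast, replaces the vertex-height filtration by an induction over the facets, with Mayer--Vietoris steps gluing along $p^{-1}(F_k)$.

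Here are the concrete gaps in your plan. First, the intermediate spaces $M_k$ are not clearly defined: ``allowing the first $k$ facets' identifications'' describes a sequence of quotient maps $M_0\to M_1\to\cdots\to M_m$, not a filtration by subspaces, so neither the Mayer--Vietoris exact sequence nor a filtration spectral sequence applies directly. If instead you intend $M_k=p^{-1}(\mathrm{int}\,P\cup F_1\cup\cdots\cup F_k)$, then $M_k\setminus M_{k-1}$ is the preimage of $F_k$ minus the facets already used, which is not a small cover and not a space whose cohomology is controlled by the inductive hypothesis. Second, the claimed $h$-vector recursion ``$h_i(P)=\sum_F(\text{lower $h$-data of facets})$'' is not a standard identity and is not true as stated: the genuine relation coming from a shelling order on the facets of $\partial P$ records, for each facet, the number of \emph{earlier} facets it meets in a ridge, not the $h$-vector of the facet itself, and there is no clean expression of $h_i(P)$ as a sum over all facets of their $h$-numbers. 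Third, and most seriously, you assert both that the Mayer--Vietoris sequences split and that ``the total dimension is bounded above by $\sum h_i$'' -- but the latter bound is exactly the nontrivial fact that any proof must establish, and it is precisely what the paper's spectral-sequence estimate $\dim E_\infty^{p,q}\le\dim E_1^{p,q}$ delivers and what your sketch presupposes rather than proves. Finally, the triviality argument as written is not a proof: you observe (correctly) that the action is unipotent, but over $\mathbb{F}_2$ every involution is automatically unipotent, so unipotence plus the dimension count gives no additional constraint and the ``no room for a nontrivial permutation'' step is a non sequitur.

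If you want to salvage a Mayer--Vietoris approach, the cleanest version in the literature goes through the equivariant description (the real moment-angle complex or the Davis--Januszkiewicz space) rather than a facet-by-facet induction on $M_{P,\lambda}$ itself; that route is also mentioned in the paper's Remark as the Buchstaber--Panov approach to the quasi-toric analogue. For the specific gap the paper addresses, the minimal fix is to retain Davis--Januszkiewicz's Morse decomposition and simply replace the false CW claim by the filtration spectral sequence, as the paper does.
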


Theorem \ref{Th1} plays a key role in the proof of the following classical theorem.

\begin{theorem}[\cite{DJ1}, Theorem 4.14, Proposition 3.10]\label{Th2}
There is an  isomorphism of graded rings
$$H^{*}(M_{P, \lambda}; \mathbb{Z}_2) \cong \mathbb{Z}_2[v_1, \ldots, v_m] / (\mathcal{I} + \mathcal{J}), $$
where $\mathcal{I}$ is the Stanley--Reisner ideal of~$P$ and $\mathcal{J}$ is the ideal generated by the linear forms $\sum_{k = 1}^{m} \lambda_i (F_k)v_k$, $i = 1, \ldots, n$. The generators $v_i$ are the cohomology classes Poincar\'e dual to the pre-images $p^{-1}(F_i)$ of the facets~$F_i$ of~$P$, respectively.
\end{theorem}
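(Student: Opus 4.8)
The plan is to go through equivariant cohomology, exactly as in the original Davis--Januszkiewicz argument, using Theorem~\ref{Th1} as an input. Set $G=\mathbb{Z}_2^n$ and consider the Borel construction, i.e.\ the fibration
$$M_{P,\lambda}\hookrightarrow EG\times_G M_{P,\lambda}\xrightarrow{\ \pi\ }BG,$$
with all cohomology taken modulo $2$; write $H^*(BG;\mathbb{Z}_2)=\mathbb{Z}_2[t_1,\dots,t_n]$, $\deg t_i=1$. The first, and main, step is to establish an isomorphism of graded $H^*(BG)$-algebras
$$H^*_{G}(M_{P,\lambda};\mathbb{Z}_2)\ \cong\ \mathbb{Z}_2[v_1,\dots,v_m]/\mathcal{I},$$
the Stanley--Reisner (face) ring of $P$, under which $t_i\mapsto\theta_i:=\sum_{k=1}^m\lambda_i(F_k)v_k$ and $v_i$ is the equivariant cohomology class dual to the $G$-invariant codimension-one submanifold $p^{-1}(F_i)$. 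To prove this I would use the face-by-face decomposition of $M_{P,\lambda}$: over the relative interior of a face $F$ the projection $p$ is the trivial bundle with fibre $G/\mathrm{St}_{\lambda}(F)$, so $EG\times_G M_{P,\lambda}$ is the homotopy colimit over the face poset of $P$ of the classifying spaces $B\,\mathrm{St}_{\lambda}(F)$; since $P$ and all its faces are contractible, the associated Mayer--Vietoris / poset spectral sequence collapses onto the face ring, and the linear forms $\theta_i$ are read off from the maps $H^*(BG)\to H^*(B\,\mathrm{St}_{\lambda}(F_k))$ prescribed by $\lambda$.

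The second step is combinatorial commutative algebra. Because the boundary of the dual polytope $\partial P^{*}$ is a simplicial $(n-1)$-sphere, Reisner's criterion shows that $\mathbb{Z}_2[v_1,\dots,v_m]/\mathcal{I}$ is Cohen--Macaulay of Krull dimension $n$. The defining property of a characteristic map is precisely that for every vertex $F_{i_1}\cap\dots\cap F_{i_n}$ the matrix $\bigl(\lambda_i(F_{i_j})\bigr)$ is invertible over $\mathbb{Z}_2$; this is equivalent to saying that $\theta_1,\dots,\theta_n$ is a linear system of parameters, i.e.\ that $\mathbb{Z}_2[v_1,\dots,v_m]/(\mathcal{I}+\mathcal{J})$ is finite-dimensional. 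A linear system of parameters in a Cohen--Macaulay ring is a regular sequence, hence $\mathbb{Z}_2[v_1,\dots,v_m]/\mathcal{I}$ is a free module over $\mathbb{Z}_2[\theta_1,\dots,\theta_n]\cong H^*(BG)$, of rank $\dim_{\mathbb{Z}_2}\bigl(\mathbb{Z}_2[v_1,\dots,v_m]/(\mathcal{I}+\mathcal{J})\bigr)=h_0+\dots+h_n$, with $h_i$ module generators in degree $i$. By Theorem~\ref{Th1} this integer also equals $\dim_{\mathbb{Z}_2}H^*(M_{P,\lambda};\mathbb{Z}_2)$.

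The last step descends from $H^*_G$ to $H^*$. By the second statement of Theorem~\ref{Th1} the monodromy action of $\pi_1(BG)=G$ on $H^*(M_{P,\lambda};\mathbb{Z}_2)$ is trivial, so the Serre spectral sequence of $\pi$ has $E_2=H^*(BG)\otimes H^*(M_{P,\lambda};\mathbb{Z}_2)$, which is a free $H^*(BG)$-module of rank $\dim_{\mathbb{Z}_2}H^*(M_{P,\lambda};\mathbb{Z}_2)$. Since $H^*_G(M_{P,\lambda};\mathbb{Z}_2)$ is, by Steps~1--2, a free $H^*(BG)$-module of the same rank, a degree-by-degree dimension count forces the spectral sequence to collapse at $E_2$. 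Consequently the restriction-to-the-fibre homomorphism $H^*_G(M_{P,\lambda};\mathbb{Z}_2)\to H^*(M_{P,\lambda};\mathbb{Z}_2)$ is surjective and its kernel is the ideal generated by $\pi^*$ of the positive-degree part of $H^*(BG)$, that is, by $\theta_1,\dots,\theta_n$; being a ring map, it induces a ring isomorphism
$$H^*(M_{P,\lambda};\mathbb{Z}_2)\ \cong\ H^*_G(M_{P,\lambda};\mathbb{Z}_2)/\mathcal{J}\ \cong\ \mathbb{Z}_2[v_1,\dots,v_m]/(\mathcal{I}+\mathcal{J}),$$
and $v_i$ maps to the restriction of the equivariant dual class of $p^{-1}(F_i)$, i.e.\ to the ordinary Poincar\'e dual of $[p^{-1}(F_i)]$. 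The principal obstacle is Step~1: identifying $H^*_G(M_{P,\lambda})$ with the face ring and, in particular, computing the images $\theta_i$ of the generators of $H^*(BG)$ requires careful bookkeeping of the gluing of $M_{P,\lambda}$ over the face poset; once this is in place, Steps~2 and~3 are a routine combination of Reisner's theorem with the collapse of a spectral sequence driven entirely by Theorem~\ref{Th1}.
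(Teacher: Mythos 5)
Your proposal is correct and takes essentially the same approach as the paper: after filling the gap in Theorem~\ref{Th1}, the paper does not give a fresh proof of Theorem~\ref{Th2} but explicitly states that one can ``proceed with the original proof \dots\ of Theorem~\ref{Th2} due to Davis and Januszkiewicz without any changes,'' and the equivariant-cohomology / face-ring / l.s.o.p.\ / Serre-spectral-sequence argument you outline is precisely that Davis--Januszkiewicz argument, with both halves of Theorem~\ref{Th1} (the trivial monodromy and the dimension count) entering exactly where they should.
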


A key role in the proof of Theorem~\ref{Th1} due to Davis and Januszkiewicz is played by the following construction (cf.~\cite{DJ1}, Section~3).

Let $\ell(x) = \langle x, l \rangle$ be a generic height function on  $\mathbb{R}^n$, i.e., a height function corresponding to a vector~$l$ that is not orthogonal any edge of~$P$. We shall say that a set $A$ lies \textit{below} a set $B$ if $\ell(x) \leq \ell(y)$ for all $x \in A$, $y \in B$. The \textit{index} of a vertex $v$ of $P$ is a number of vertices  adjacent to~$v$ that lie below $v$.

Let $F_v$ be the union of the relative interiors of all faces of~$P$ containing $v$ and lying below $v$. The closure of~$F_v$ is a face of~$P$, and $p^{-1} (F_v)$ is an open cell of dimension equal to the index of~$v$. 
The number of $i$-dimensional cells~$p^{-1} (F_v)$ is equal to the number of index~$i$ vertices of~$P$. It is well known that the latter number is independent of~$\ell$  and is equal to~$h_i (P)$, see~\cite{Br3}. 

Davis and Januszkiewicz claimed and used substantially in their proof of Theorem~\ref{Th1} that the cells~$p^{-1} (F_v)$  form a cell structure on~$M_{P, \lambda}$. Nevertheless, this is not always true.

\begin{problem}
The boundary of a cell $p^{-1} (F_v)$ can be not contained in the union of cells~$p^{-1} (F_w)$ of smaller dimensions. Hence,  the cells~$p^{-1} (F_v)$ generally do not form a CW decomposition of~$M_{P, \lambda}$.
\end{problem}

\begin{example}
Consider a pentagon $ABCDE$ with an arbitrary characteristic map~$\lambda$. Assume that the height function~$\ell$ is chosen so that $\ell(A)>\ell(B)>\ell(C)>\ell(D)>\ell(E)$. Then both endpoints of the one-dimensional cell~$p^{-1}(F_B)$ coincide with the point~$p^{-1}(C)$, which is  contained in the relative interior of the one-dimensional cell~$p^{-1} (F_C)$.
\end{example}

\section{Correction of the proof}

In this section we describe how to fill  in the gap in the proof of Theorem~\ref{Th1}.

Let $s_i$ be the sequence of heights of vertices of~$P$ numbered in the ascending order. Consider the filtration on $M_{P, \lambda}$ formed by the sets 
$$G_p := \bigcup_{\ell(v) \leq s_p} F_v,$$ 
and the corresponding spectral sequence in cohomology with the coefficient group~$\mathbb{Z}_2$. The first page of this spectral sequence is 
$$ E_1^{p, q} = H^{p + q} (G_p, G_{p - 1}; \mathbb{Z}_2).$$
Since the filtration~$\{G_p\}$ is invariant under the action of~$\mathbb{Z}_2^n$ on~$M_{P, \lambda}$, we obtain a well-defined $\mathbb{Z}_2^n$-action on the spectral sequence~$E^r_{p,q}$.
 Since for any vertex~$v$ the set $p^{-1}(F_v)$ is an open cell, the quotient space $G_p / G_{p - 1}$ is homeomorphic to the sphere of dimension equal to the index of the vertex at height~$s_p$. Hence every group~$ E_1^{p, q}$ is either trivial or isomorphic to~$\mathbb{Z}_2$. Therefore, the action of~$\mathbb{Z}_2^n$ on every~$E_1^{p,q}$, hence, on every~$E_r^{p,q}$ for $r>1$ is trivial. Thus, $\mathbb{Z}_2^n$ acts trivially on the modulo~$2$ cohomology of $M_{P, \lambda}$. We have: 
\begin{equation*}
 \dim H^{i}(M_{P, \lambda}; \mathbb{Z}_2) = \sum_{p + q = i}\dim E_{\infty}^{p, q} \leq \sum_{p + q = i}\dim E_1^{p, q} = h_i(P).
 \end{equation*} 
These estimates and the triviality of the $\mathbb{Z}_2$-action on~$H^*(M_{P, \lambda};\mathbb{Z}_2)$  are exactly those two facts that were deduced by Davis and Januszkiewicz from the wrong fact that the cells~$p^{-1} (F_v)$ form a CW decomposition of~$M_{P, \lambda}$. Since we have given another proof of these two fact, we can further proceed with the original proof of Theorem~\ref{Th1} and then Theorem~\ref{Th2} due to Davis and Januszkiewicz without any changes. As a consequence, the estimates above turn out to be equalities:
$$ \dim H^{i}(M_{P, \lambda}; \mathbb{Z}_2)  = h_i (P).$$

\begin{remark}


There are analogs of Theorems~\ref{Th1} and~\ref{Th2} for quasi-toric manifolds, which are also due to Davis and Januszkiewicz. There is a similar gap in their proofs, and this gap can be filled in similarly. However, unlike Theorems~\ref{Th1} and~\ref{Th2}, their analogs for quasi-toric manifolds admit a different  proof obtained by Buchstaber and Panov, see e.g.~\cite{BP2}.

An analog of the spectral sequence of the filtration $\{ G_p \}$ can be constructed for any covering over~$M_{P, \lambda}$. However, generally it will have many non-trivial differentials.
\end{remark}

\section{Two-fold coverings over small covers: Section classes}

Let $S$ be a hyperplane section of $P$. Its pre-image~$p^{-1}(S)$  is a modulo~$2$ singular cycle in~$M_{P,\lambda}$, but if $S$ is not a facet of $P$, then this cycle is homologically trivial. On the other hand, if $p^{-1} (S)$ is disconnected then its connected component always represents a non-trivial homology class.

Pre-image of $S$ may be non-connected only if $S$ is a generic section, i.\,e., contains no vertex of~$P$. Further we assume that either $S$ is a generic hyperplane section with disconnected pre-image~$p^{-1}(S)$ or $S$ is a facet of~$P$. In the former case we denote by $M_{S, \lambda_S}$ a connected component of $p^{-1} (S)$, and in the latter case we put  $M_{S, \lambda_S}=p^{-1} (S)$. It is easy to see that in both cases $M_{S, \lambda_S}$ is the small cover over $S$ corresponding to the characteristic function given by $\lambda_S(F\cap S)=\lambda(F)$ whenever $F\cap S$ is non-empty and does not coincide with~$S$.

\begin{definition}
We say that a class $w \in H^{1} (M_{P, \lambda}; \mathbb{Z}_2)$ is a \textit{section class} if there exists an $S$ as above such that $w$ is the Poincar\'e dual of the homology class of~$M_{S, \lambda_S}$.
\end{definition}

By definition, the section classes corresponding to the facets of $P$ are the generators~$v_i$. If $S$ is a generic hyperplane section with disconnected $p^{-1}(S)$, then $(D[M_{S, \lambda_S}])^2 = 0$, where $D$ stands for the Poincar\'e duality operator. Hence, non-triviality of~$w^2$ is an obstruction for~$w$ to be a section class corresponding to a generic hyperplane section. 

It is interesting if the orientation two-fold covering over $M_{P, \lambda}$ can correspond to a section class. Let us consider a simple example.

\begin{example}
Suppose that $P$ admits a regular coloring of its facets in $n$ colors. Then $P$ admits a characteristic map~$\lambda$ that maps every facet of the $i$th color to the $i$th basic vector~$e_i \in \mathbb{Z}_2^n$. For instance, a permutohedron admits such coloring. Choose an element $a \in \langle e_2, \dots, e_n \rangle$ and a facet $G$ of the first color, and consider the map $\nu$ given by 
$$ \nu(F) = 
\left\{
\begin{aligned} 
&\lambda(G) + a  &&\text{if }F = G, \\ &\lambda(F) &&\text{if }F \neq G.
\end{aligned}
\right.
$$ One can easily check that $\nu$ is characteristic. Now, suppose that $a$ has an odd number of non-zero coordinates.  Then the first Stiefel--Whitney class of $M_{P, \nu}$ is the Poincar\'e dual of~$p^{-1}(G)$. Hence, the orientation two-fold covering over~$M_{P, \nu}$ corresponds to a section class.
\end{example}

\section{The Betti numbers of two-fold coverings corresponding to section classes}

It is well known that equivalence classes of two-fold coverings over a topological space are in one-to-one correspondence with elements of its first cohomology group with the coefficient group~$\mathbb{Z}_2$. We denote by~$M_w$ the two-fold covering over~$M_{P, \lambda}$ corresponding to a class $w \in H^{1} (M_{P, \lambda}; \mathbb{Z}_2)$.

We start with the following proposition, which shows that the problem of the computation of the Betti numbers of $M_w$ can be reduced to the case when $w$ is a sum of two section classes corresponding to facets. Thus it shows that section classes are reasonable family of cohomology classes in the context of this problem.

\begin{propos}
The problem of computing the Betti numbers of $M_w$ can be reduced to the case of $w = D[p^{-1} (F)] + D[p^{-1} (G)]$, where $F$ and $G$ are facets of a polytope.
\end{propos}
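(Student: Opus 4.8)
The plan is to reduce the general case to the stated special case in two steps, using the structure of section classes and the multiplicativity of the classifying map for two-fold coverings. First I would recall that a two-fold covering $M_w$ depends only on the class $w \in H^1(M_{P,\lambda};\mathbb{Z}_2)$, and that if $w = w_1 + w_2$ then $M_w$ is the fiberwise sum of the coverings $M_{w_1}$ and $M_{w_2}$; more usefully, the covering $M_{w}$ together with a chosen lift structure can be analyzed via the subgroup it determines. The key observation is that every section class can be written, up to adding classes dual to facets, in a form that is controlled by a single generic hyperplane section. So the first step is to handle the case when $w = D[M_{S,\lambda_S}]$ for a generic hyperplane section $S$ with disconnected preimage: here I would show that, after possibly replacing $w$ by $w + \sum_{i \in I} v_i$ for a suitable set $I$ of facets (which does not change the homotopy type of the total space, only the labelling), one may assume $w$ is already a sum of two facet classes. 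The geometric idea: a generic hyperplane $S$ separates the facets of $P$ into those lying entirely "above" $S$ and those lying entirely "below", plus those it actually cuts; the class $D[M_{S,\lambda_S}]$, computed in the Stanley--Reisner presentation of Theorem~\ref{Th2}, is congruent modulo the relations $\mathcal{I}+\mathcal{J}$ to a sum of the $v_i$ over the facets on one side of $S$. Then using the linear relations $\mathcal{J}$ (each coordinate gives $\sum_k \lambda_i(F_k) v_k = 0$) one collapses this sum.

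The second step is the genuinely reductive one: I would show that computing the Betti numbers of $M_w$ for $w$ an arbitrary sum $\sum_{i \in I} v_i$ of facet classes can be reduced to the two-facet case $w = v_F + v_G$. The mechanism here is that $M_w$, for $w = \sum_{i \in I} v_i = D[p^{-1}(\bigcup_{i\in I} F_i)]$, can be built from $M_{P,\lambda}$ by an explicit cut-and-reglue construction along the preimages of the facets in $I$; equivalently, $M_w$ is itself a small cover (or a connected component of one) over a related polytope obtained by a sequence of "doublings" of $P$ along facets. Since doubling along one facet at a time corresponds exactly to passing from $w$ to $w + v_{F}$ for a single facet $F$, and since the homeomorphism type of the resulting space only depends on the symmetric differences involved, one sees inductively that the essential data is captured by pairs of facets. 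More concretely: adding $v_F$ to $w$ is an invertible operation on the set of two-fold coverings, and I would argue that the orbit structure of these operations, together with the relations coming from $\mathcal{J}$, means that the Betti numbers of $M_w$ for any $w \in \langle v_1, \dots, v_m\rangle$ are determined by those of the $M_{v_F + v_G}$.

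The main obstacle I anticipate is making precise the claim that $D[M_{S,\lambda_S}]$, for a disconnected generic section, actually lies in the subgroup of $H^1$ generated by the $v_i$ — a priori it need not, and in fact the whole point of introducing section classes is that they form a strictly larger family than $\langle v_1,\dots,v_m\rangle$. So the reduction cannot be a literal algebraic identity $w = v_F + v_G$ in $H^1(M_{P,\lambda};\mathbb{Z}_2)$; rather it must be a statement about the \emph{total spaces}: I must exhibit a homeomorphism (or at least a $\mathbb{Z}_2$-cohomology isomorphism) between $M_w$ for a section class $w$ and $M_{v_F+v_G}$ for an associated polytope and facets, perhaps after enlarging the polytope. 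Concretely, the trick will be that cutting $M_{P,\lambda}$ along the disconnected preimage $p^{-1}(S)$ realizes $M_w$ as a small cover over the polytope $P$ "cut along $S$", which has $S$ (or a facet-isotopic copy of it) as an honest facet; in that enlarged picture the section class becomes a facet class, and then step two applies. Verifying that this cut polytope is again simple and that the induced characteristic map is well-defined — i.e. that $\lambda_S$ extends compatibly across the cut — is where the careful combinatorial bookkeeping will be needed, and that is the step I expect to occupy most of the actual proof.
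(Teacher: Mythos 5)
Your proposal does not recover the paper's argument and contains several misconceptions that would prevent it from working.

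The most basic issue is your ``main obstacle'': you claim that section classes might not lie in the subgroup of $H^1$ generated by the $v_i$ and that section classes ``form a strictly larger family than $\langle v_1,\dots,v_m\rangle$.'' This is not so. By Theorem~\ref{Th2}, $H^*(M_{P,\lambda};\mathbb{Z}_2)$ is a quotient of $\mathbb{Z}_2[v_1,\dots,v_m]$, so the $v_i$ already span all of $H^1(M_{P,\lambda};\mathbb{Z}_2)$; every $w\in H^1$ (section class or not) equals $\sum_{i\in I}v_i$ for some $I\subset\{1,\dots,m\}$. Indeed, the proposition is stated for arbitrary $w$, not only for section classes, and the paper's proof simply opens with ``Suppose $w=\sum_{i\in I}D[p^{-1}(F_i)]$'' because this is automatic. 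So your entire Step~1 is solving a non-problem, and along the way you make a false claim: replacing $w$ by $w+\sum_{i\in I}v_i$ certainly does change the homotopy type of the total space $M_w$ (different $w$'s give genuinely different double covers).

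The real content of the proposition is your Step~2 (reducing $\sum_{i\in I}v_i$ to a two-facet class), and here your proposal is too vague to constitute a proof: the ``cut-and-reglue/doubling'' mechanism is never made precise, there is no inductive scheme showing how the Betti numbers of $M_w$ for $|I|$ large are determined by those for $|I|=2$, and the claim that $M_w$ is itself a small cover over a polytope obtained by doubling $P$ along facets is not justified (and is dubious as stated; doubling a simple polytope along a facet does not in general yield a convex polytope, and $M_w$ is not obviously realized this way). You also miss the paper's key constructions entirely: the passage to the prism $P\times[0,1]$, the characteristic map $\lambda_w$ that adds $e_{n+1}$ to $\lambda(F_i)$ for $i\in I$, the identification $M_{P\times[0,1],\lambda_w}\cong P(\xi\oplus 1)$ where $\xi$ is the real line bundle with $w_1(\xi)=w$, the observation that $\pi^*(w)$ becomes the sum of exactly the two facet classes of the top and bottom of the prism, and the crucial homeomorphism of the associated double cover with $M_w\times\mathbb{S}^1$ (coming from the triviality of the pullback of $\xi$ to $M_w$), which via K\"unneth gives the reduction. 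None of these steps appear in your proposal, and without them the reduction does not go through.
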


\begin{proof}
Suppose that $w = \sum_{i \in I} D[p^{-1}(F_i)]$ for some set of indices $I$. It is easy to see that the following map on the set of facets of the prism $P \times [0, 1]$ is characteristic: 
$$ 
\lambda_w (G) = \left\{
\begin{aligned}
        &\lambda(F_i) &&\text{if }G=F_i \times [0, 1]\text{ and }i \notin I,\\
        &\lambda(F_i) + e_{n + 1} &&\text{if }G=F_i \times [0, 1]\text{ and }i \in I,\\
        &e_{n + 1} &&\text{if }G = P \times \{ 0 \}\text{ or }P \times \{ 1 \}.
        \end{aligned} 
        \right.
        $$
It is not hard to check that $M_{P \times [0, 1], \lambda_w}$ coincides with the projectivization~$P(\xi\oplus 1)$, where $\xi$ is the real linear bundle over $M_{P, \lambda}$ corresponding to~$w$, and $1$ denotes the trivial  real linear bundle over $M_{P, \lambda}$. Moreover, if $\pi \colon M_{P \times [0, 1], \lambda_w} \to M_{P, \lambda}$ is the projection map, then $\pi^{*}(w) = D[\tilde{p}^{-1}(P \times 0)] + D[\tilde{p}^{-1}(P \times 1)]$, where $\tilde{p} \colon M_{P \times [0, 1], \lambda_w} \to P \times [0, 1]$ is the projection of the small cover $M_{P \times [0, 1], \lambda_w}$.

Finally, the two-fold covering over $M_{P \times [0, 1], \lambda_w}$ corresponding to $\pi^{*} (w)$ is homeomorphic to $M_w \times \mathbb{S}^1$. Consequently, if one could compute the Betti numbers of this covering space, then he would immediately  obtain the Betti numbers of the original two-fold covering~$M_w$.
\end{proof}

Let $h^{*} (-; \mathbb{Z}_2)$ be the dimension of $H^{*} (- ; \mathbb{Z}_2)$. The main result of this section is as follows.

\begin{theorem}\label{Th3}
Suppose that $ w \in H^1 (M_{P, \lambda}; \mathbb {Z}_2) $ is a section class corresponding to~$S$. Then $$ h^{m} (M_w; \mathbb{Z} _2) = 2h_{m} (P) - h_{m-1} (S) - h_{m} (S),$$ 
where $ h_i(S) $ and $ h_i(P) $ are the $h$-numbers of~$S$ and~$P$, respectively.
\end{theorem}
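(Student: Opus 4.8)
The plan is to use the Gysin exact sequence of the two-fold covering $M_w \to M_{P,\lambda}$, together with the geometric description of the section class $w$ as the Poincar\'e dual of the submanifold $M_{S,\lambda_S}$, which is itself a small cover whose Betti numbers are the $h$-numbers of $S$ by Theorem~\ref{Th1}. Recall that for the two-fold covering $M_w$ classified by $w\in H^1(M_{P,\lambda};\mathbb{Z}_2)$, with associated real line bundle $\xi$ and Euler class $e(\xi)=w$, there is a long exact sequence
\begin{equation*}
\cdots \to H^{i-1}(M_{P,\lambda};\mathbb{Z}_2) \xrightarrow{\,\cdot\, w\,} H^{i}(M_{P,\lambda};\mathbb{Z}_2) \xrightarrow{\pi^*} H^{i}(M_w;\mathbb{Z}_2) \to H^{i}(M_{P,\lambda};\mathbb{Z}_2) \xrightarrow{\,\cdot\, w\,} H^{i+1}(M_{P,\lambda};\mathbb{Z}_2) \to \cdots
\end{equation*}
From this one reads off $h^{m}(M_w;\mathbb{Z}_2)=\dim\operatorname{coker}(\cdot w\colon H^{m-1}\to H^m) + \dim\ker(\cdot w\colon H^{m}\to H^{m+1})$, so everything reduces to understanding the rank of multiplication by $w$ on $H^*(M_{P,\lambda};\mathbb{Z}_2)$ in each degree.

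The key step is to identify the kernel and cokernel of $\cdot w$ in terms of $S$. Geometrically, cup product with $w=D[M_{S,\lambda_S}]$ corresponds to intersecting cycles with the submanifold $M_{S,\lambda_S}$; more precisely, if $j\colon M_{S,\lambda_S}\hookrightarrow M_{P,\lambda}$ is the inclusion, then $j^*$ together with the Gysin (Umkehr) map $j_!$ factor multiplication by $w$ as $j_! \circ j^*$ (up to the usual shift, using that $M_{S,\lambda_S}$ has a trivial or nontrivial normal bundle according to the two cases, but in $\mathbb{Z}_2$-cohomology this does not matter). I would argue that $j^*\colon H^*(M_{P,\lambda};\mathbb{Z}_2)\to H^*(M_{S,\lambda_S};\mathbb{Z}_2)$ is surjective — this follows from Theorem~\ref{Th2}, since the restriction map sends the facet generators $v_i$ of $P$ to the corresponding facet generators of $S$ (for facets $F_i$ meeting $S$) or to zero, and the Stanley--Reisner presentation of $H^*(M_{S,\lambda_S};\mathbb{Z}_2)$ is generated by exactly these classes. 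Dually, $j_!$ is injective. Hence the image of $\cdot w = j_! j^*$ has dimension $\sum_i h_i(S)$ in total, distributed so that $\dim\operatorname{im}(\cdot w\colon H^{m-1}\to H^m) = h_{m-1}(S)$ (the shift coming from the codimension-one inclusion). Therefore $\dim\ker(\cdot w\colon H^m\to H^{m+1}) = h_m(P) - h_m(S)$ and $\dim\operatorname{coker}(\cdot w\colon H^{m-1}\to H^m) = h_m(P) - h_{m-1}(S)$, and summing gives the claimed formula $h^m(M_w;\mathbb{Z}_2) = 2h_m(P) - h_{m-1}(S) - h_m(S)$.

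The main obstacle is making the factorization $\cdot w = j_! j^*$ and the surjectivity of $j^*$ fully rigorous in both cases, especially in the generic-section case where $M_{S,\lambda_S}$ is only one connected component of $p^{-1}(S)$. In that case I must be careful that the Poincar\'e dual of a single component, rather than of all of $p^{-1}(S)$, is what appears, and that the restriction-of-generators argument still applies; here the hypothesis that the section class is, by definition, $D$ of \emph{one} component is essential, and I would check that $p^{-1}(S)$ has exactly two homeomorphic components, each a copy of $M_{S,\lambda_S}$, so that the self-intersection $w^2=0$ (already noted in the excerpt) is consistent with the Gysin picture. A secondary point to verify is that the spectral-sequence/$h$-number bookkeeping respects the degree shift correctly, i.e. that a codimension-one section contributes $h_{m-1}(S)$ in degree $m$ and not $h_m(S)$; this is a matter of tracking the Thom isomorphism shift and can be cross-checked against Poincar\'e duality for $M_w$ (whose dimension is $n$) and against low-dimensional examples such as the one in the preceding Example. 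Once the surjectivity of $j^*$ is in hand, the rest is the purely formal exact-sequence computation sketched above.
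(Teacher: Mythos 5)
Your proposal takes the same overall route as the paper: use the Gysin exact sequence for $M_w \to M_{P,\lambda}$ together with the factorization of $w \smile -$ as $j_! \circ j^*$ through the hypersurface $M_{S,\lambda_S}$, so that the Betti number formula follows once $j^*$ is known to be surjective. Where you differ is in how that surjectivity is justified. The paper abstracts the argument into a general lemma about a closed connected manifold $X$ with a connected codimension-one submanifold $Y$, and asserts that $i_*[Y]\neq 0$ alone forces $i_*$ to be injective (equivalently, $i^*$ surjective) in every degree. That general assertion is not actually true: an embedded torus $T^2 \subset S^1 \times S^2$ representing the nonzero class of $H_2(S^1\times S^2;\mathbb{Z}_2)$, with $w$ the generator of $H^1(S^1\times S^2;\mathbb{Z}_2)$, satisfies all the hypotheses of the lemma, yet the lemma's formula would then give negative mod-$2$ Betti numbers for the resulting double cover. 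Your argument sidesteps this by invoking Theorem~\ref{Th2}: the restriction $j^*$ sends each facet generator $v_i$ of $H^*(M_{P,\lambda};\mathbb{Z}_2)$ to the corresponding facet generator of $H^*(M_{S,\lambda_S};\mathbb{Z}_2)$ (or to zero), and since these classes generate the target ring, $j^*$ is surjective. This is the sounder justification in the small-cover setting, and it supplies exactly the extra hypothesis under which the paper's lemma becomes correct. The rank/kernel/cokernel bookkeeping you carry out in the Gysin sequence is correct; the points you flag as needing verification (the Thom-shift degree convention and the fact that $p^{-1}(S)$ in the generic-section case splits into two homeomorphic copies of $M_{S,\lambda_S}$) are genuine but minor, and the latter is anyway part of how the paper sets up $M_{S,\lambda_S}$ as a small cover over $S$.
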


The proof of this theorem reduces to the following lemma.

\begin{unsignedlem}
Let $ X $ be a connected smooth closed manifold and let $ \widetilde {X} $ be the two-fold covering of it corresponding to a non-trivial class $ w \in H^1 (X; \mathbb {Z} _2) $. Let $ i \colon Y \hookrightarrow X$ be a connected smooth submanifold of $ X $ such that $ i_{*} [Y] $ is the Poincar\'e dual of~$ w $. Then $$ h^{m} (\tilde {X}; \mathbb {Z} _2) = 2h ^ {m} (X; \mathbb {Z} _2) - h ^ {m} (Y; \mathbb{Z}_2) - h^{m-1} (Y; \mathbb {Z}_2). $$
\end{unsignedlem}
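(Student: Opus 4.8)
The plan is to run the Gysin sequence of the double covering and then push all the work onto $Y$. All cohomology is taken with $\mathbb Z_2$ coefficients. Let $L\to X$ be the real line bundle with $w_1(L)=w$, so that its sphere bundle is $\pi\colon\widetilde X\to X$; applying the mod $2$ Thom isomorphism to the cofibre sequence $S(L)\to D(L)\to\operatorname{Th}(L)$ yields the exact sequence
\[
\cdots\to H^{m-1}(X)\xrightarrow{\ \cup w\ }H^{m}(X)\xrightarrow{\ \pi^{*}\ }H^{m}(\widetilde X)\xrightarrow{\ \partial\ }H^{m}(X)\xrightarrow{\ \cup w\ }H^{m+1}(X)\to\cdots .
\]
Exactness at $H^m(\widetilde X)$ gives $h^{m}(\widetilde X)=2h^{m}(X)-\rho_{m-1}-\rho_{m}$, where $\rho_{j}:=\operatorname{rank}\bigl(\cup w\colon H^{j}(X)\to H^{j+1}(X)\bigr)$. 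So it suffices to show $\rho_m=h^m(Y)$ for every $m$.

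To obtain this I would rewrite $\cup w$ through the submanifold $Y$. Since $i_*[Y]$ is Poincaré dual to $w$, the class $w$ equals $i_{!}(1)$, where $i_{!}\colon H^{\bullet}(Y)\to H^{\bullet+1}(X)$ is the umkehr homomorphism (Poincaré dual to the pushforward $i_*$ in homology). The projection formula then gives $\alpha\cup w=i_{!}(i^{*}\alpha)$ for all $\alpha\in H^{m}(X)$, i.e. $\cup w$ factors as $H^{m}(X)\xrightarrow{i^{*}}H^{m}(Y)\xrightarrow{i_{!}}H^{m+1}(X)$. Hence $\rho_m\le h^m(Y)$, with equality exactly when $i^{*}$ is onto in degree $m$ and $i_{!}$ is injective in degree $m$. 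The injectivity of $i_{!}$ comes for free once $i^{*}$ is known to be onto: by Poincaré duality on $X$ and on $Y$ the map $i_{!}$ in degree $m$ is conjugate to $i_{*}\colon H_{n-1-m}(Y)\to H_{n-1-m}(X)$ (with $n=\dim X$), which over the field $\mathbb Z_2$ is the transpose of $i^{*}\colon H^{n-1-m}(X)\to H^{n-1-m}(Y)$ under the Kronecker pairing; so $i^{*}$ surjective in all degrees forces $i_{!}$ injective in all degrees, and then $\rho_m=h^m(Y)$ as required.

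The main obstacle is therefore the surjectivity of $i^{*}\colon H^{*}(X;\mathbb Z_2)\to H^{*}(Y;\mathbb Z_2)$. For a general connected codimension-one submanifold $Y$ Poincaré dual to $w$ this can genuinely fail --- for instance when $Y$ is the fibre of a surface bundle over the circle whose monodromy acts non-trivially on $\mathbb Z_2$-cohomology --- so this step must exploit the structure of the setting in which the lemma is applied. There $X=M_{P,\lambda}$ and $Y=M_{S,\lambda_S}$, and $i^{*}v_{F}=v_{F\cap S}$ whenever $F\cap S$ is a facet of $S$ while $i^{*}v_{F}=0$ otherwise; since $H^{*}(M_{P,\lambda};\mathbb Z_2)$ is generated in degree one by the $v_{F}$ and $H^{*}(M_{S,\lambda_S};\mathbb Z_2)$ by the $v_{F\cap S}$ (Theorem~\ref{Th2}), every generator of the target lies in the image of $i^{*}$, so $i^{*}$ is onto. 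Substituting $\rho_m=h^m(Y)$ into the formula of the first step, and then using $h^i(M_{P,\lambda})=h_i(P)$ and $h^i(M_{S,\lambda_S})=h_i(S)$ from Theorem~\ref{Th1}, yields Theorem~\ref{Th3}.
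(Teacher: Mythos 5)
Your proof uses the same Gysin-sequence skeleton as the paper: from the exact sequence
\[
\cdots\to H^{m-1}(X)\xrightarrow{\cup w}H^{m}(X)\xrightarrow{\pi^{*}}H^{m}(\widetilde X)\xrightarrow{\partial}H^{m}(X)\xrightarrow{\cup w}H^{m+1}(X)\to\cdots
\]
one gets $h^m(\widetilde X)=2h^m(X)-\rho_{m-1}-\rho_m$ (or equivalently, in the paper's bookkeeping, $h^{m+1}(\widetilde X)=h^{m+1}(X)-h^m(X)+k_m+k_{m+1}$), and then both you and the paper reduce the computation of the rank (resp.\ kernel) of $\cup w$ to the map $i^*$, via the same factorization $\cup w = i_!\circ i^*$ coming from Poincar\'e duality on $X$ and $Y$. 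Up to this point the two arguments are the same.

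Where you diverge is in the crucial last step, and there you have in fact caught a genuine error in the paper. The paper asserts that $i_*[Y]\neq 0$ already forces $i_*\colon H_*(Y;\mathbb Z_2)\to H_*(X;\mathbb Z_2)$ to be injective (equivalently $i^*$ surjective), and hence $k_m=h^m(X)-h^m(Y)$ holds for \emph{any} connected codimension-one $Y$ dual to a nontrivial $w$. This implication is false, and the lemma as stated is false. Your counterexample class is exactly right: take $X$ a torus bundle over $S^1$ whose monodromy $\phi$ has $\phi^*-1$ invertible on $H^1(T^2;\mathbb Z_2)$ (e.g.\ the Anosov matrix $\bigl(\begin{smallmatrix}2&1\\1&1\end{smallmatrix}\bigr)$), $Y$ a fibre, and $w$ the pullback of the generator of $H^1(S^1;\mathbb Z_2)$. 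Then $i_*[Y]$ is Poincar\'e dual to $w$, all hypotheses hold, $h^1(X)=1$, $h^1(Y)=2$, $h^0(Y)=1$, so the claimed formula gives $h^1(\widetilde X)=-1$ --- while a Wang-sequence computation shows $h^1(\widetilde X;\mathbb Z_2)=1$. So the correct content of the lemma is exactly the conditional statement you prove: the formula holds precisely when $i^*\colon H^*(X;\mathbb Z_2)\to H^*(Y;\mathbb Z_2)$ is onto (equivalently $i_!$ injective, equivalently $i_*$ injective, all three being transposes of each other over the field $\mathbb Z_2$).

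Your resolution is also the right one and is not in the paper: in the application to Theorem~\ref{Th3}, $X=M_{P,\lambda}$, $Y=M_{S,\lambda_S}$, and $i^*$ is a ring homomorphism sending each degree-one generator $v_F$ to $v_{F\cap S}$ (or $0$); since by Theorem~\ref{Th2} the target ring is generated in degree one by the $v_{F\cap S}$, surjectivity of $i^*$ follows, and the formula is valid where it is actually used. In short: your argument is essentially the paper's, carried out correctly; the lemma should either carry the extra hypothesis that $i^*$ be surjective, or be replaced by a statement specific to small covers with the generation-in-degree-one argument supplied.
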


\begin{proof}

Recall the exact Gysin sequence: $$\ldots \to H ^ {m} (X; \mathbb{Z} _2) \to H ^ m (\widetilde {X}; \mathbb{Z}_2) \to H ^ m (X; \mathbb {Z} _2) \xrightarrow{w \smile -}  H ^ {m + 1} (X; \mathbb{Z}_2) \to \ldots $$ 
The exactness of this sequence easily implies that $$ h ^ {m + 1} (\widetilde {X}; \mathbb {Z} _2) = h ^ {m + 1} (X; \mathbb {Z} _2) - h ^ {m} (X; \mathbb{Z}_2) + k_m + k_{m + 1}, $$ where $ k_m $ is the dimension of the kernel of $ (w \smile -) \colon H ^ {m} (X; \mathbb {Z} _2) \to H ^ {m + 1} (X; \mathbb {Z} _2) $.
Thus, the proof reduces to computing the numbers $ k_m $. The manifolds $X$ and~$Y$ are closed and connected, hence, for any cohomology class $ \sigma \in H ^ {*} (X; \mathbb{Z}_2) $, there is a homology class $ z \in H_ {n - 1 - *} (Y; \mathbb { Z} _2) $ such that
$$\begin{cases} D_X (w \smile \sigma) = i_* (z), \\ D_Y (z) = i ^ * (\sigma), \end{cases}$$
where $D_X$ and~$D_Y$ are the Poincar\'e duality operators for~$X$ and~$Y$, respectively.
Since $ i_{*} ([Y]) \neq 0 $, the map $ i_ {*} $ is injective, hence, $ \mathrm {Ker} (w \smile -) = \mathop{\mathrm {Ker}} i ^ *$. Similarly, the dual map $i^*$ is surjective,  therefore, $ k_m = h ^ {m} (X; \mathbb {Z} _2) - h ^ {m} (Y; \mathbb {Z} _2). $
\end{proof}

\begin{proof}[Proof of Theorem~\ref{Th3}]
By definition, a section class cannot be trivial. Since  the modulo two  Betti numbers of a small cover of a polyhedron coincide with the $h$-numbers of this polyhedron, the theorem follows from the above lemma.
\end{proof}

\section*{Acknowledgements} 

The author would like to thank his advisor Alexander A. Gaifullin for his constant guidance and advice throughout this research.

\end{document}